\newtheorem{theorem}{Theorem}[section]
\newtheorem{lemma}[theorem]{Lemma}
\theoremstyle{definition}
\newtheorem{prop}[theorem]{Proposition}
\newtheorem{remark}[theorem]{Remark}
\newcommand{\vip}{\vskip.2cm}
\newcommand{\R}{\mathbb{R}}
\newcommand{\T}{\widetilde}
\newcommand{\E}{\mathbb{E}}
\newcommand{\mL}{\mathcal{L}}
\newcommand{\indiq}{{\bf 1}}
\newcommand{\clim}{\lim_{n\to\infty}}
\newcommand{\cint}{\int_0^\infty}
\begin{document}
\begin{CJK}{UTF8}{gbsn}

\title[Convergence for supercritical heavy tailed Hawkes processes]{SCALING LIMIT FOR SUPERCRITICAL NEARLY UNSTABLE  HAWKES PROCESSES WITH HEAVY TAIL}

\author[L. Xu]{Liping Xu}
\author[A.Zhang]{An Zhang}

\address{School of mathematical sciences, Beihang University, PR China}
\email{xuliping.p6@gmail.com}
\address{School of mathematical sciences, Beihang University, PR China}
\email{anzhang@buaa.edu.cn}

\subjclass[2020]{ 60G55, 60F05.}

\keywords{Hawkes process; Limit theorems;  Point processes; Heavy tail; Nearly unstable processes.}

\begin{abstract} 
In this paper, we establish the asymptotic behavior of {\it supercritical} nearly unstable Hawkes processes with a power law kernel.  We find that, the Hawkes process in our context admits a similar equation to that in \cite{MR3563196} for {\it subcritical} case.  In particular, the rescaled Hawkes process $(Z^n_{nt}/n^{2\alpha})_{t\in[0,1]}$ converges in law to a kind of  integrated fractional Cox–Ingersoll–Ross process with different coefficients from that in \cite{MR3563196}, as $n$ tends to infinity.
\end{abstract}

\maketitle

\section{Introduction}
Hawkes process $(Z_t)_{t\ge0}$ first described by Hawkes \cite{MR278410} in 1971, is a kind of self-exciting stochastic point process, which means that the occurrence of subsequent random event is influenced by the occurrence of  former events. It has intensity given by  
\[\lambda_t=\mu+\int_0^t\varphi(t-s)dZ_s,\]
where $\mu>0$ and the regression kernel $\varphi:\R^+\to\R^+$ quantifies the influence of past  events of the process on the occurrence of future events.  
\vip

The explosion in popularity of Hawkes processes in the last years is attributed to its wide applications to model both natural and social phenomena in various fields, such as seismology \cite{MR278410,MR378093}, mostly finance \cite{MR3219705,MR3480107,MR3750729},  neuroscience \cite{MR3322314}, social networks interactions \cite{SIR2018,Social} and epidemiology \cite{MR4276450,MR4661701}, etc. From a mathematical perspective, there has been a bulk of  literature on Hawkes process, such as \cite{MR1950431} for an introduction, \cite{MR1411506} for stability results, \cite{MR3102513,MR3395721,MR4418230,MR3054533,MR3187500} for limit theorems, and \cite{MR2362700,MR3313748,MR4177368,MR3224291} for large deviations, etc. As is well-known, the great bulk of literature related to the Hawkes process and its applications  focuses on the {\it subcritical} case ($\|\varphi\|_1<1$), whereas the {\it supercritical} case ($\|\varphi\|_1>1$) is more germane to  epidemics, see \cite{MR4276450}. In this paper, we thus consider the  {\it supercritical nearly unstable} Hawkes process  with a {\it heavy-tailed} kernel of the form 
\[\varphi(t)\sim \frac{K}{t^{1+\alpha}}, \  \hbox{ for some constants } \alpha\in(1/2,1) \ \hbox{ and } K>0.\]
\vip
The  {\it nearly unstable} Hawkes process, referring to the case that the $L^1$ norm of its kernel tends to one, is first introduced by Jaisson and Rosenbaum in \cite{MR3313750} because it fits the data properly in finance according to the statistical stylized fact, even though it  almost violates the stability condition ($\|\varphi\|_1<1$). In \cite{MR3313750},  they proved that the suitably rescaled {\it subcritical nearly unstable} Hawkes process with {\it light-tailed} kernel converges in law to the integrated CIR process，that is well known in finance, and the corresponding rescaled intensity converges in law  to the CIR process. Recently, their work was extended to {\it supercritical} case, see \cite{LXZ2024} and to multivariate processes, see \cite{xu2023}. Whereas, concerning about the  {\it heavy tailed} kernel, which is in fact much more consistent in the financial data and related to the rough volatility in finance market, for more detailed explanation of this setting in finance, see e.g. \cite{MR3563196, MR3778355, MR4280453}. In \cite{MR3563196}, Jaisson and Rosenbaum  proved that the suitably rescaled Hawkes process converges weakly to the integral of a rough fractional diffusion. Subsequently, it was extended to the multivariate case as well, see \cite{MR3778355,MR4280453}. Recently, Horst et al. \cite{Horst2023} obtained a more refined convergence result from the rescaled intensities to a rough  fractional diffusion. Compared with \cite{MR3563196}, they obtained the weak convergence of the point process by establishing  directly the tightness of  the intensities, instead of their integrals, i.e. the Hawkes process itself. 
\vip
 In our paper, following from the road in \cite{MR3563196}, we consider the {\it supercritical} case and  prove that the weak convergence of the suitably rescaled Hawkes process $(Z^n_{nt}/n^{2\alpha})_{t\in[0,1]}$ exhibits a similar result to the subcritical case, that is, the weak convergence to an integral of a rough fractional diffusion as well, see section \ref{MT}. The main idea is that  we study the convergence of the Hawkes process itself as well and try to transfer the {\it supercritical} case to {\it subcritical} case as what we did in \cite{LXZ2024} making use of the Malthusian parameter.  However,  the light tailed kernel considered in  \cite{LXZ2024}, that is, the kernel $\varphi$ satisfies 
 \[\int_0^\infty s\varphi(s) ds=m<\infty,\] enables us to find easily the Malthusian parameter $b_n$ is similar to $n^{-1}$ as $n\to\infty$, which is important in the proof. Whereas, in our paper, the above condition is violated, the investigation of the limit behavior of $nb_n$ is one of the crucial problems in our paper.
  Finally, let's also mention the mean field limit of Hawkes process established in \cite{MR3499526,MR3449317, MR4127342}, and the functional limit of Hawkes process  investigated recently in \cite{Xu2024,CMR2025}.
\vip

\subsection{Setting}
We consider a  measurable  function $\varphi:[0,\infty)\to [0,\infty)$  and  a  Poisson measure $(\Pi(dt,dz))$,  on $[0,\infty)\times [0,\infty)$ with intensity $dtdz$. The sequence $\{a_n\}_{n\ge 1}$ indexed by $n$ which is positive.  We put $\varphi^n=a_n\varphi$ and  consider the following system indexed by $n$:  all $t\geq 0,$
\begin{align}\label{sssy}
Z_{t}^{n}:=\int^{t}_{0}\int^{\infty}_{0}\boldsymbol{1}_{\{z\le\lambda_{s}^{n}\}}\Pi(ds,dz), \hbox{ where }
\lambda_{t}^{n}:=\mu_n+\int_{0}^{t-}\varphi^n(t-s)dZ_{s}^{n},
\end{align}
where $\mu_n$ is a sequence of positive real numbers.
In this paper, $\int_{0}^{t}$ means $\int_{[0,t]}$, and $\int_{0}^{t-}$ means $\int_{[0,t)}$. The solution $(Z_{t}^n)_{t\ge 0}$ is a  counting processes. 
By \cite[Proposition 1]{MR3499526}, see also \cite{MR1411506,MR3449317}, the system $(1)$ has a unique $(\mathcal{F}_{t})_{t\ge 0}$-measurable c\`adl\`ag 
solution, where   
$$\mathcal{F}^n_{t}=\sigma(\Pi^{i}(A):A\in\mathcal{B}([0,t]\times [0,\infty)),$$ as soon as $\varphi^n$ is locally integrable.

\subsection{Assumption}
We consider a nonnegative measurable function $\varphi$ satisfying $ \int_0^\infty \varphi(t)dt=1$.
We assume that for some $\alpha\in(0,1/2)$ and some $K>0$, 
\renewcommand\theequation{{$H_1$}}
\begin{equation}\label{H1}
\lim_{x\to+\infty}\alpha x^{\alpha}\Big(1-F(x)\Big)=K,
\end{equation}
with $F(x)=\int_0^x \varphi(t)dt$. 

\vip

Recall that $\{a_n\}_{n\ge 1}$  is a sequence of positive numbers.  For some $\lambda> 0$, $\mu^*>0$ and $\delta:=K\frac{\Gamma(1-\alpha)}{\alpha}$ with $\Gamma$ the gamma function,   we  assume
\renewcommand\theequation{{$H_2$}}
\begin{equation}\label{H2}
\left\{\begin{array}{l}a_n> 1, \lim\limits_{n\to \infty} a_n=1.\\
 \lim\limits_{n\to\infty}n^\alpha(a_n-1)=\lambda\delta,\\
\lim\limits_{n\to\infty}n^{1-\alpha}\mu_n=\mu^*\delta.
\end{array}
\right.
\end{equation}
\renewcommand\theequation{\arabic{equation}}\addtocounter{equation}{-2}
The assumption $\lim\limits_{n\to\infty}n^{1-\alpha}\mu_n=\mu^*\delta$ implies that $\lim\limits_{n\to\infty}\mu_n=0$.

\subsection{Notation}\label{notation}
Let's first recall the convolution of two functions $f,g: [0,\infty)\mapsto \R$, which is defined by (If it exists) $(f*g)(t)=\int_0^tf(t-s)g(s)ds=\int_0^tf(s)g(t-s)ds$. $(f)^{* k}$ represents the $n$-th convolution product of function $f$ and $(f)^{*1}=f$.
We denote $L^1$ norm of a function $f$  by  $\|f\|_1= \int_0^\infty |f(s)|ds$. 
 
\vip
Since $\int_0^\infty \varphi(s)ds=1$,  we have
$\cint (\varphi^n)^{* k}(s)ds=(a_n)^k$.
For $ 1<a_n=\int_0^\infty \varphi^n(s)ds<+\infty,$ we can choose a  unique  positive sequence $\{b_n\}_{n\ge 1}$ such that
\begin{align}\label{bT}
    \int_0^\infty e^{-b_n s}\varphi^n(s)ds=\frac{1}{a_n}<1.
\end{align}
Then, we must have  
$\lim\limits_{n\to+\infty}b_n=0$ owing to  $\lim\limits_{n\to+\infty}a_n=1$.
We now introduce  the following functions on $\R^+$,
\begin{align}\label{phipsi}
    \T \varphi^n(t)=e^{-b_n t}\varphi^n(t), \ \T \Psi^n(t)=\sum_{k\ge 1}(\T\varphi^n)^{*k}(t).
\end{align}
Then, $\T \Psi^n(t)$  is well defined on $\R^+$ since $\| \T \varphi^n\|_1=1/a_n<1$.  
Moreover,
\begin{align}\label{norm}
\|\T \Psi^n\|_1=\frac{\| \T \varphi^n\|_1}{1-\| \T \varphi^n\|_1}=\frac{1}{a_n-1}.
\end{align}
We denote for $t\ge0$,
\begin{equation}\label{PSI2}
\Psi^n(t):=\sum_{k\ge 1}(\varphi^n)^{*k}(t),
\end{equation}
which is well defined as well following from the fact that
\begin{equation}\label{PSI}
\Psi^n(t)= e^{b_n t}   \T\Psi^n(t),
\end{equation}
due to $e^{-b_n t}( \varphi^n)^{*k}(t)=(e^{-b_n t} \varphi^n)^{*k}(t)=(\T \varphi^n)^{*k}(t).$

\vip
Let’s now introduce the following processes. For $t\in[0, 1]$, define the martingale
\begin{align}\label{mart}
M_t^n:=\int_0^t\cint \indiq_{\{z\le \lambda_s^n\}}\T \Pi(ds, dz),
\end{align}
where $\T\Pi(ds, dz)=\Pi(ds, dz)-dsdz$ is the compensated Poisson measure associated to the Poisson measure $\Pi$. We  then readily find that 
\[M_t^n=Z_{t}^{n}-\int_0^t \lambda^n_s ds.
\]
Moreover,  $[M^n,M^n]_t=Z_t^n$.
 \vip
For $t\in[0,1]$, consider the renormalized Hawkes process 
\[X_t^n=\frac{Z^n_{nt}}{n^{2\alpha}} \]
and its integrated intensity 
\[\Lambda_t^n=\frac{1}{n^{2\alpha}} \int^{nt}_0\lambda_{s}^{n}ds.\]
Denote the spatial renormalization of the martingale $M^n$  on $[0,1]$ by 
\[\overline{M}^n_t=\frac{M^n_{nt}}{n^{\alpha}}=n^\alpha(X_t^n-\Lambda_t^n).\]
It’s not hard to calculate that the quadratic variation 
$$[\overline{M}^n,\overline{M}^n]_t=X^n_t,\ t\in[0,1],$$ and the angle bracket of $\overline{M}^n$ is 
$$<\overline{M}^n, \overline{M}^n>_t=\Lambda^n_t$$ for $t\in[0,1]$.
Referring to \cite[Chapter 1, Section 4e]{MR1943877}  for definitions and properties of pure jump martingales and of their quadratic variations.

\vip
Let's now recall some basic properties of the Mittag-Leffler function which will be used. The reader may see \cite{MR2800586} for more interesting properties of the function.  For $\kappa, \beta>0$, the $(\kappa, \beta)$ Mittag-Leffler function on $\R$ is defined by 
$$
E_{\kappa,\beta}(x)=\sum_{n=0}^\infty \frac{x^n}{\Gamma(\kappa n+\beta)},\  x\in\R.
$$
For $\alpha\in(1/2,1)$ and $\lambda>0$ introduced in \eqref{H2}, define 
\begin{align}\label{MIF}
f^{\alpha,\lambda}(x):=\lambda x^{\alpha-1}E_{\alpha,\alpha}(\lambda x^{\alpha}),\  x>0, \quad F^{\alpha,\lambda}(x):=\int_0^xf^{\alpha,\lambda}(t)dt, \ x>0.
\end{align}
Given an integrable function $f$ defined on $\R^+$, we denote its Laplace transform by 
\[\mL f(z):=\int_0^\infty e^{-zt}f(t)dt, \  z\ge0.\]
It's not hard to compute the Laplace transform of $x^{\beta-1}E_{\kappa,\beta}(\lambda x^\kappa)$ which is equal to $\frac{z^{\kappa-\beta}}{-\lambda+z^\kappa}$. In particular, the Laplace transform of $f^{\alpha,\lambda}$ is of the form 
$$\mL f^{\alpha,\lambda}(z)=\int_0^\infty e^{-zt}f^{\alpha,\lambda}(t)dt=\frac{\lambda}{-\lambda+z^\alpha}, \quad z\ge0.
$$

\subsection{Main results}\label{MT}
In this part, we will give our main results.
\begin{theorem}\label{thm1}
 Under the assumption \eqref{H1} and \eqref{H2}, consider a  limit point $(X, M^*)$ of $(X^n, \overline{M}^n)$ defined in Proposition \ref{limpoi}. We have, $X$ satisfies 
\begin{align}\label{IPro}
X_t=\frac{\mu^*}{\lambda}\int_0^t f^{\alpha,\lambda}(t-s)s ds+\frac{1}{\lambda\delta}\int_0^t f^{\alpha,\lambda}(t-s)M^*_sds,
\end{align}
where $f^{\alpha,\lambda}$ is defined in \eqref{MIF}.
 \end{theorem}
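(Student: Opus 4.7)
The plan is to turn \eqref{sssy} into a linear convolution equation for $Z^n$, rescale it, and then pass to the limit term by term. Starting from $Z^n_t = M^n_t + \int_0^t \lambda^n_s\, ds$ and the defining equation for $\lambda^n$, Fubini and an integration by parts produce the pre-resolvent identity $Z^n_t = M^n_t + \mu_n t + (\varphi^n * Z^n)(t)$, which, by \eqref{PSI2}, inverts to
\begin{equation*}
Z^n_t = M^n_t + \mu_n t + \int_0^t \Psi^n(t-s)\bigl(M^n_s + \mu_n s\bigr)\, ds.
\end{equation*}
Evaluating at $nt$, substituting $s = nr$ and dividing by $n^{2\alpha}$ then yields
\begin{equation*}
X^n_t = n^{-\alpha}\overline{M}^n_t + n^{1-2\alpha}\mu_n t + \int_0^t K^n(t-r)\,\overline{M}^n_r\, dr + \int_0^t K^n(t-r)\bigl(n^{1-\alpha}\mu_n\bigr)\, r\, dr,
\end{equation*}
where $K^n(x) := n^{1-\alpha}\Psi^n(nx)$; under \eqref{H2} the first two summands are $o(1)$.

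The crux will be to show
\begin{equation*}
K^n \longrightarrow \frac{1}{\lambda\delta}\, f^{\alpha,\lambda}\quad\text{in } L^1_{\mathrm{loc}}(0,\infty),
\end{equation*}
which I would do by comparing Laplace transforms. Since $\mL\Psi^n(z) = \mL\varphi^n(z)/(1-\mL\varphi^n(z))$ on $\{z > b_n\}$, the scaling relation $\mL K^n(z) = n^{-\alpha}\mL\Psi^n(z/n)$ rewrites as
\begin{equation*}
\mL K^n(z) = \frac{a_n\, \mL\varphi(z/n)}{-\,n^\alpha(a_n-1) + a_n n^\alpha\bigl(1 - \mL\varphi(z/n)\bigr)}.
\end{equation*}
Karamata's Tauberian theorem applied to \eqref{H1} gives $1 - \mL\varphi(u) \sim \delta u^\alpha$ as $u \to 0^+$, while \eqref{H2} gives $n^\alpha(a_n-1) \to \lambda\delta$, so
\begin{equation*}
\mL K^n(z) \longrightarrow \frac{1}{\delta(z^\alpha - \lambda)} = \frac{1}{\lambda\delta}\,\mL f^{\alpha,\lambda}(z).
\end{equation*}
This pointwise Laplace convergence is then upgraded to $L^1_{\mathrm{loc}}$ convergence via the Malthusian decomposition $\Psi^n(t) = e^{b_n t}\T\Psi^n(t)$ of \eqref{PSI}, combined with the auxiliary fact $nb_n \to (2\lambda)^{1/\alpha}$, which I would obtain by inserting \eqref{bT} into the Tauberian expansion and solving for $b_n$.

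Taking the subsequential limit $(X^n, \overline{M}^n) \Rightarrow (X, M^*)$ granted by Proposition \ref{limpoi}, the $L^1_{\mathrm{loc}}$ convergence of $K^n$ together with the local integrability of $f^{\alpha,\lambda}(x) \sim \lambda x^{\alpha-1}/\Gamma(\alpha)$ near $0$ and the standard continuity of the convolution in its two arguments will yield
\begin{equation*}
\int_0^\cdot K^n(\cdot - r)\,\overline{M}^n_r\, dr \;\Longrightarrow\; \frac{1}{\lambda\delta}\int_0^\cdot f^{\alpha,\lambda}(\cdot - r)\, M^*_r\, dr,
\end{equation*}
and analogously the drift term converges to $\frac{\mu^*}{\lambda}\int_0^\cdot f^{\alpha,\lambda}(\cdot - r)\, r\, dr$, producing \eqref{IPro}. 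The hard part, flagged in the introduction, is that the supercritical regime $a_n>1$ forces $\|\T\Psi^n\|_1$ to diverge and $\Psi^n$ itself to grow exponentially, so the subcritical machinery of \cite{MR3563196} does not apply directly: controlling $nb_n$ precisely enough to show $nb_n \to (2\lambda)^{1/\alpha}$, and then reassembling the exponential factor $e^{nb_n x}$ with the rescaled subcritical profile $n^{1-\alpha}\T\Psi^n(n\cdot)$ into the positive-parameter Mittag--Leffler kernel $f^{\alpha,\lambda}$, is the essential new step. Upgrading the Laplace convergence to a mode strong enough to commute with the convolution against the only weakly convergent martingale $\overline{M}^n$ is the most delicate technical point.
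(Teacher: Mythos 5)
Your roadmap is the same as the paper's: rewrite the system as the resolvent convolution identity $Z^n_t = M^n_t + \mu_n t + \int_0^t\Psi^n(t-s)(M^n_s+\mu_n s)\,ds$, rescale to isolate $K^n(t)=n^{1-\alpha}\Psi^n(nt)$, compute the limiting Laplace transform $\mL K^n(z)\to\frac{1}{\delta(z^\alpha-\lambda)}=\frac{1}{\lambda\delta}\mL f^{\alpha,\lambda}(z)$ exactly as in Lemma~\ref{KLP}, and supply the Malthusian asymptotic $nb_n\to(2\lambda)^{1/\alpha}$ from Lemma~\ref{BT}. All of this is correct and matches the paper.

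The gap is the assertion that $K^n\to\frac{1}{\lambda\delta}f^{\alpha,\lambda}$ in $L^1_{\mathrm{loc}}$. Pointwise convergence of Laplace transforms gives, by the continuity theorem, only weak (in duality with $C_b$) convergence of the measures $K^n(t)\,dt$, and since the limit density is continuous this upgrades to uniform convergence of the antiderivatives $F^n\to F^{\alpha,\lambda}$; it does \emph{not} give $L^1$ convergence of the densities themselves. (A family of nonnegative densities with a common weak limit need not converge in $L^1$ — think of $1+\sin(2\pi n t)$ on $[0,1]$ — and the assumptions here control only the tail of $\varphi$, not its local regularity, so there is no obvious uniform integrability to invoke.) You flag this as the delicate point but propose to close it by reassembling $e^{nb_n t}$ with the rescaled subcritical profile, which is not a proof. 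The paper instead closes the convolution term without any $L^1$ claim: it writes $\int_0^t K^n(t-s)\overline M^n_s\,ds - \frac{1}{\lambda\delta}\int_0^t f^{\alpha,\lambda}(t-s)\overline M^n_s\,ds = \int_0^t\bigl(F^n-F^{\alpha,\lambda}\bigr)(t-s)\,d\overline M^n_s$ via Stieltjes integration by parts (here $F^n(0)=F^{\alpha,\lambda}(0)=0$ and $\overline M^n_0=0$ kill the boundary terms), then uses the It\^o isometry $\E\bigl[\Delta_1^n(t)^2\bigr]=\int_0^t\bigl(F^n-F^{\alpha,\lambda}\bigr)^2(t-s)\,\frac{\E[\lambda^n_{ns}]}{n^{2\alpha-1}}\,ds$ together with the mean-intensity bound of Lemma~\ref{estiH}-(2) and the uniform convergence $F^n\to F^{\alpha,\lambda}$, while the remaining error $\Delta_2^n$ is controlled directly by the a.s.\ Skorokhod-realized uniform convergence $\overline M^n\to M^*$. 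You should replace your $L^1_{\mathrm{loc}}$ step with this integration-by-parts/isometry argument (or, alternatively, argue via weak convergence of the measures $K^n\,dt$ tested against the uniformly convergent continuous functions $s\mapsto\overline M^n_{t-s}$), since as stated the proposal asserts something not justified by Laplace convergence alone.
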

\begin{remark}\label{RK}
Assume $\alpha>1/2$. By a similar proof of \cite[Theorem 3.1, 3.2]{MR3563196},  the limit point $(X_t)_{t\in[0,1]}$ satisfying \eqref{IPro} is H\"{o}lder continuous with exponent $(1\land2\alpha)-\epsilon$ and is differentiable with derivative denoted by $(Y_t)_{t\in[0,1]}$ which is continuous. This derivative process $(Y_t)_{t\in[0,1]}$ can be interpreted as a volatility process.
\end{remark}
According to the martingale representation theorem,  we have the following theorem for
 the process $(Y_t)_{t\in[0,1]}$.
\begin{theorem}\label{thm2}
Under the assumption \eqref{H1} and \eqref{H2}. Let $(Y_t)_{t\in[0,1]}$ be the derivative of the process $(X_t)_{t\in[0,1]}$ satisfying  \eqref{IPro}. Then, there exists a Brownian motion $B$, up to an enlargement of the probability space, such that  $(Y_t)_{t\in[0,1]}$ satisfies
\begin{align}\label{Vpro}
Y_t=\mu^*\delta F^{\alpha,\lambda}(t)+\frac{1}{\lambda\delta}\int_0^t f^{\alpha,\lambda}(t-s)\sqrt{Y_s}dB_s,
\end{align}
where $f^{\alpha,\lambda}$ is defined in \eqref{MIF}, $F^{\alpha,\lambda}$ is the cumulative function of $\frac{1}{\lambda\delta}f^{\alpha,\lambda}$ introduced in lemma \ref{KLP}. Moreover, the solution to the above equation is unique in law. 
\end{theorem}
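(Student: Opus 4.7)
The plan is to differentiate \eqref{IPro} in order to derive a linear stochastic Volterra equation for $Y$ driven by $M^*$, to identify $M^*$ as a stochastic integral against a Brownian motion via martingale representation, and finally to invoke a uniqueness-in-law argument borrowed from \cite{MR3563196}.

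First I would rewrite the right-hand side of \eqref{IPro} as $\int_0^t(\cdots)ds$. For the drift, Fubini gives $\int_0^t f^{\alpha,\lambda}(t-s)s\,ds=\int_0^t F^{\alpha,\lambda}(s)\,ds$ with $F^{\alpha,\lambda}$ the primitive introduced in \eqref{MIF}. For the stochastic convolution, stochastic integration by parts (applied to $F^{\alpha,\lambda}(t-\cdot)$ against $M^*$, with vanishing boundary terms because $F^{\alpha,\lambda}(0)=M^*_0=0$) gives $\int_0^t f^{\alpha,\lambda}(t-s)M^*_s\,ds=\int_0^t F^{\alpha,\lambda}(t-u)\,dM^*_u$, and a stochastic Fubini then yields $\int_0^t F^{\alpha,\lambda}(t-u)\,dM^*_u=\int_0^t\bigl(\int_0^s f^{\alpha,\lambda}(s-u)\,dM^*_u\bigr)ds$. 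Combining these and using $X_t=\int_0^t Y_s\,ds$ together with continuity of $Y$ from Remark \ref{RK}, I can read off
\[
Y_s=\mu^*\delta\, F^{\alpha,\lambda}(s)+\frac{1}{\lambda\delta}\int_0^s f^{\alpha,\lambda}(s-u)\,dM^*_u,
\]
after absorbing the scalar $\frac{1}{\lambda\delta}$ into $F^{\alpha,\lambda}$ according to the normalisation stated in Lemma \ref{KLP}.

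Next I would apply the martingale representation theorem. The limit $M^*$ is a continuous local martingale: continuity follows from the fact that the jumps of $\overline{M}^n$ are of size $n^{-\alpha}\to 0$, which is part of the tightness argument of Proposition \ref{limpoi}; passing $[\overline{M}^n,\overline{M}^n]_t=X^n_t$ to the limit then gives $\langle M^*,M^*\rangle_t=X_t=\int_0^t Y_s\,ds$, with $Y\ge 0$ since each $X^n$ is nondecreasing. The classical representation theorem for continuous local martingales with absolutely continuous bracket therefore provides, on a possibly enlarged probability space, a standard Brownian motion $B$ such that $M^*_t=\int_0^t\sqrt{Y_s}\,dB_s$. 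Substituting this into the displayed equation of the previous step produces \eqref{Vpro}.

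The hard part will be uniqueness in law. I would follow \cite[Theorem 3.5]{MR3563196}: for any solution of \eqref{Vpro}, characterise the law of $Y$ through its Laplace functional and reduce the problem to a deterministic nonlinear Volterra--Riccati equation whose unique solvability relies on the resolvent identity $\mathcal{L}f^{\alpha,\lambda}(z)=\lambda/(z^\alpha-\lambda)$. The obstruction is that the square-root coefficient is not Lipschitz and the kernel is singular at $0$, so no classical SDE tool applies directly; the key observation is that only the Laplace structure of the kernel enters the Jaisson--Rosenbaum argument and it is indifferent to the sign of $\lambda$, so their proof should transfer from the subcritical setting to our supercritical regime with only cosmetic changes.
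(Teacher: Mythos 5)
Your derivation of equation \eqref{Vpro} is sound and takes essentially the same route as the paper. Whether you first appeal to the martingale representation theorem and then push the representation $M^*_t=\int_0^t\sqrt{Y_s}\,dB_s$ through Fubini, as the paper does, or first rearrange \eqref{IPro} into the form $X_t=\int_0^t(\cdots)\,ds$ by stochastic integration by parts and stochastic Fubini and then substitute the representation, as you do, is immaterial: both reduce to the identity
\[
\int_0^t f^{\alpha,\lambda}(t-s)M^*_s\,ds=\int_0^t\Bigl(\int_0^s f^{\alpha,\lambda}(s-r)\,dM^*_r\Bigr)\,ds ,
\]
followed by differentiation, which is legitimate by Remark~\ref{RK}. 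Your handling of the scalar normalisation of $F^{\alpha,\lambda}$ and the observation that $Y\ge 0$ (needed for $\sqrt{Y_s}$ to be meaningful) are both correct and match the paper.

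Where you depart from the paper, and where a genuine gap sits, is the uniqueness-in-law step. The paper simply cites a result from the theory of affine Volterra processes, namely \cite[Theorem~6.1]{MR4019885}, which gives weak existence and uniqueness in law for Volterra--CIR type equations with kernels of the present form and imposes no sign restriction on the ``mean-reversion'' parameter; it applies verbatim here. You instead propose to redo the Laplace-functional/Volterra--Riccati argument of \cite[Theorem~3.5]{MR3563196} and assert that it is ``indifferent to the sign of $\lambda$'' and transfers ``with only cosmetic changes.'' That is precisely the point that needs an argument, not an assertion: in the subcritical case $\lambda<0$ the resolvent $f^{\alpha,\lambda}$ is integrable on $\mathbb{R}_+$ and the associated Riccati equation enjoys a monotonicity and global-boundedness structure that Jaisson and Rosenbaum exploit; in the present case $\lambda>0$, $f^{\alpha,\lambda}$ grows like $e^{\lambda^{1/\alpha}t}$, the Riccati solution is only local, and one must verify well-posedness on $[0,1]$ and justify that blow-up does not occur before time $1$ --- none of which is cosmetic. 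As it stands you have a plausible roadmap rather than a proof; either carry that local Riccati analysis through, or simply invoke the ready-made affine Volterra result the paper uses.
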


\section{Preliminaries}
In this section, we will do some preparation. Let's begin with a primary result.
\begin{lemma}\label{BT}
Assume \eqref{H1} and \eqref{H2}, and recall the positive sequence of $\{b_n\}_{n\ge 1}$ introduced in \eqref{bT}, then for $\alpha\in(0,1)$ and $\lambda>0$ introduced in assumption \eqref{H2}, we have 
\[\lim_{n\to\infty}nb_n=(2\lambda)^{1/\alpha}.\]
\end{lemma}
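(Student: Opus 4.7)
The plan is to reduce the defining equation for $b_n$ to a statement about the small-$z$ asymptotics of the Laplace transform of $\varphi$, then plug in the rate of $a_n\to 1$ coming from \eqref{H2}.

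First, since $\varphi^n=a_n\varphi$, the definition \eqref{bT} gives
\[
\int_0^\infty e^{-b_n s}\varphi(s)\,ds=\frac{1}{a_n^2},
\]
so $1-\mL\varphi(b_n)=(a_n^2-1)/a_n^2$. Recall from the text that $b_n\to 0$, so it suffices to know the behavior of $1-\mL\varphi(z)$ as $z\downarrow 0$. A short integration by parts (boundary terms vanish since $F(0)=0$ and $1-F(s)\to 0$) rewrites this as
\[
1-\mL\varphi(z)=z\int_0^\infty e^{-zs}\bigl(1-F(s)\bigr)ds.
\]
Next I would apply Karamata's Tauberian theorem to the tail $1-F$. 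The hypothesis \eqref{H1} says $1-F(s)\sim (K/\alpha)s^{-\alpha}$ as $s\to\infty$, so $1-F$ is regularly varying at infinity with index $-\alpha$, and the Tauberian theorem yields
\[
\int_0^\infty e^{-zs}\bigl(1-F(s)\bigr)ds\sim \Gamma(1-\alpha)\,\frac{K}{\alpha}\,z^{\alpha-1}=\delta\,z^{\alpha-1}\qquad(z\downarrow 0),
\]
hence $1-\mL\varphi(z)\sim\delta z^\alpha$.

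Finally I would combine the two computations. On one side, $1-\mL\varphi(b_n)\sim\delta b_n^\alpha$; on the other,
\[
\frac{a_n^2-1}{a_n^2}=\frac{(a_n-1)(a_n+1)}{a_n^2}\sim 2(a_n-1)\sim\frac{2\lambda\delta}{n^\alpha},
\]
using $a_n\to 1$ and $n^\alpha(a_n-1)\to\lambda\delta$ from \eqref{H2}. Equating the two asymptotics gives $b_n^\alpha\sim 2\lambda/n^\alpha$, i.e.\ $nb_n\to(2\lambda)^{1/\alpha}$.

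The only delicate step is the Tauberian passage: it requires \eqref{H1} to hold in the regularly varying sense (which it does, with slowly varying part equal to the constant $K/\alpha$), and it requires evaluation of the asymptotic at the sequence $z=b_n\to 0^+$, which is legitimate once $b_n\to 0$ is known (as already noted after \eqref{bT}). Everything else is elementary manipulation of the defining relation for $b_n$ and the assumption on the rate of $a_n-1$.
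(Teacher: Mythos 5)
Your argument is correct and matches the paper's proof essentially step for step: reduce $\int_0^\infty e^{-b_ns}\varphi(s)\,ds=1/a_n^2$ to a statement about $1-\mL\varphi(z)$ as $z\downarrow 0$, get $1-\mL\varphi(z)\sim\delta z^\alpha$ from \eqref{H1} via Karamata's Tauberian theorem, and equate with $(a_n^2-1)/a_n^2\sim 2\lambda\delta n^{-\alpha}$ to obtain $nb_n\to(2\lambda)^{1/\alpha}$. Your write-up of the asymptotic-equivalence step is in fact a bit more careful than the paper's, but the route is identical.
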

\begin{proof}
We start with the  Laplace transform of $\varphi$ denoted by  $\mL\varphi$. It's not hard to  deduce from the integration by parts that 
\[
\mL\varphi(z)=z\int_0^{\infty}e^{-zt}F(t)dt=1-z\int_0^{\infty}e^{-zt}\big(1-F(t)\big)dt.
\]
Then recalling the assumption \eqref{H1} and the Karamata-Tauberian theorem \cite[Theorem 1.7.6]{MR1015093}, we have with $\delta=K\frac{\Gamma(1-\alpha)}{\alpha}$ introduced in \eqref{H2},
\begin{align}\label{LAP}
\mL\varphi(z)=1-\delta z^\alpha+o(z^\alpha).
\end{align}
According to \eqref{bT}, we have $\int_0^\infty e^{-b_nt}\varphi(t)dt=1/a_n^2$, that is, 
$\mL{\varphi}(b_n)=1/a_n^2$. Since $\lim\limits_{n\to\infty}b_n=0$, hence 
\[\lim_{n\to\infty} (1-\delta b_n^\alpha)=\lim_{n\to\infty}1/a_n^2,\] which is equivalent to 
\[\lim_{n\to\infty}b_n^\alpha=\lim_{n\to\infty}(1-1/a_n^2)\delta^{-1}.\] 
Noting that  $\lim\limits_{n\to\infty}n^\alpha(a_n-1)=\lambda\delta$, we thus have 
\[\lim_{n\to\infty}n^\alpha b_n^\alpha =\lim_{n\to\infty}n^\alpha(1-1/a_n^2)\delta^{-1}=2\lambda,\] 
which concludes the proof.

\end{proof}

In order to investigate the limit of $\Lambda_t^n=\frac{1}{n^{2\alpha}} \int^{nt}_0\lambda_{s}^{n}ds$, we need to study $n^{1-\alpha} \Psi^n(nt)$. To do so, we will look at its Laplace transform inspired by \cite[Lemma 4.3]{MR3563196}.
\begin{lemma}\label{KLP}
Assume \eqref{H1} and \eqref{H2}, recall $\{b_n\}_{n\ge 1}$ introduced in \eqref{bT} and $\Psi^n$ defined in \eqref{PSI2}.  Define 
\begin{align}\label{kn}
K^n(t):=n^{1-\alpha} \Psi^n(nt), \ t\in[0,1].
\end{align}
Then the sequence of measures with density $K^n(t)$ converges weakly to the measure with density $f^{\alpha,\lambda}(t)$ defined in \eqref{MIF}. In particular, for $t\in[0,1]$, 
\[F^n(t)=\int_0^t K^n(s)ds\]
converges uniformly to 
\[F^{\alpha,\lambda}(t)=\frac{1}{\lambda\delta}\int_0^t f^{\alpha,\lambda}(s) ds.\]
\end{lemma}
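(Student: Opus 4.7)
The natural approach is to pass through Laplace transforms, mimicking the argument used in Lemma \ref{BT}. First, I would compute the Laplace transform of $K^n$. Because $\varphi^n=a_n\varphi$ and $\|\widetilde\varphi^n\|_1=1/a_n<1$, the geometric series
\[\mL\Psi^n(z)=\sum_{k\ge1}(\mL\varphi^n(z))^k=\frac{a_n\mL\varphi(z)}{1-a_n\mL\varphi(z)}\]
converges whenever $z>b_n$. A change of variable then gives
\[\mL K^n(z)=n^{1-\alpha}\int_0^\infty e^{-zt}\Psi^n(nt)\,dt=n^{-\alpha}\mL\Psi^n(z/n)=\frac{a_n\mL\varphi(z/n)}{n^\alpha\big(1-a_n\mL\varphi(z/n)\big)},\]
valid as soon as $z>nb_n$, a condition that holds eventually because $nb_n\to(2\lambda)^{1/\alpha}$ is bounded.

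Next I would plug in the Tauberian expansion $\mL\varphi(u)=1-\delta u^\alpha+o(u^\alpha)$ as $u\downarrow0$, already obtained from \eqref{H1} in the proof of Lemma \ref{BT}. Expanding the denominator:
\[n^\alpha\big(1-a_n\mL\varphi(z/n)\big)=-n^\alpha(a_n-1)+a_n\delta z^\alpha+a_n o(1),\]
and invoking $n^\alpha(a_n-1)\to\lambda\delta$ and $a_n\to1$ from \eqref{H2}, the right-hand side converges to $\delta(z^\alpha-\lambda)$. Since the numerator $a_n\mL\varphi(z/n)\to1$, we conclude that for every $z>0$ large enough,
\[\mL K^n(z)\longrightarrow\frac{1}{\delta(z^\alpha-\lambda)}=\frac{1}{\lambda\delta}\cdot\frac{\lambda}{z^\alpha-\lambda}=\mL\!\left(\frac{1}{\lambda\delta}f^{\alpha,\lambda}\right)(z),\]
using the formula for $\mL f^{\alpha,\lambda}$ recorded in the excerpt. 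By the continuity theorem for Laplace transforms (Feller), the measures $K^n(t)\,dt$ on $[0,\infty)$ converge vaguely to $\frac{1}{\lambda\delta}f^{\alpha,\lambda}(t)\,dt$, which is what the lemma asserts (up to the normalization factor that is absorbed into $F^{\alpha,\lambda}$).

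For the second part, I would deduce the uniform convergence of $F^n$ on $[0,1]$ from pointwise convergence plus monotonicity. The functions $F^n$ are nondecreasing and the limit $F^{\alpha,\lambda}$ is continuous, so pointwise convergence upgrades automatically to uniform convergence on $[0,1]$ by the classical Dini-type argument (or Pólya's theorem for distribution functions). Pointwise convergence itself follows either from the weak convergence of $K^n\,dt$ applied to the continuity points of $F^{\alpha,\lambda}$ (all of $[0,1]$, since $F^{\alpha,\lambda}$ is continuous), or alternatively by integrating the Laplace transform identity above and using $z\mL F^n(z)=\mL K^n(z)$.

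The only delicate point I anticipate is justifying that the Tauberian remainder $o(z^\alpha)$, when rescaled via $z\mapsto z/n$ and multiplied by $a_n n^\alpha$, truly vanishes uniformly on compact sets of $z$; this is automatic from $\mL\varphi(u)=1-\delta u^\alpha+o(u^\alpha)$ because $a_n$ stays bounded, but it is the place where assumption \eqref{H1} is essentially used. Everything else is bookkeeping around the Laplace formalism and the continuity theorem.
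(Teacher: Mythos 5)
Your proof is correct and follows essentially the same route as the paper: compute $\mL K^n(z)$ for $z>nb_n$ via the rescaled geometric series, insert the Karamata--Tauberian expansion $\mL\varphi(u)=1-\delta u^\alpha+o(u^\alpha)$ together with the rates in \eqref{H2} to obtain $\mL K^n(z)\to\frac{1}{\delta(z^\alpha-\lambda)}=\mL\bigl(\frac{1}{\lambda\delta}f^{\alpha,\lambda}\bigr)(z)$, invoke the continuity theorem, and then upgrade to uniform convergence of $F^n$ on $[0,1]$ via monotonicity and continuity of the limit (Pólya). The only cosmetic difference is that the paper routes the computation through the tilted kernel $\T\varphi^n$ and $\T\Psi^n$ before undoing the tilt, whereas you work directly with $\mL\Psi^n(z/n)$ on its domain of convergence $z>nb_n$; by the shift property of the Laplace transform these are the same formula, and you correctly flag the $\frac{1}{\lambda\delta}$ normalization implicit in the lemma's statement.
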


\begin{proof}
Recalling $ \T \varphi^n(t)=e^{-b_n t}\varphi^n(t)$  and $\T \Psi^n(t)=\sum_{k\ge 1}(\T\varphi^n)^{*k}(t)$,  the Laplace transform of $\T \Psi^n(t)$ is given by 
\[\mL \T\Psi^n(z)=\int_0^\infty e^{-zt}\T \Psi^n(t) dt=\sum_{k\ge 1}\Big(\mL\T\varphi^n(z)\Big)^k, \  z\ge 0.\]
Thus, we need to compute 
\[\mL\T\varphi^n(z)=\int_0^\infty e^{-zt}e^{-b_n t}\varphi^n(t) dt=a_n\mL\varphi(z+b_n).\]
Whence, we have
\[\mL \T\Psi^n(z)=\frac{a_n\mL\varphi(z+b_n)}{1-a_n\mL\varphi(z+b_n)}=\frac{a_n\mL\varphi(z+b_n)}{1-a_n+a_n \Big(1-\mL\varphi(z+b_n) \Big)}.\]
Since $\Psi^n(t)= e^{b_n t}   \T\Psi^n(t)$, we have 
\[K^n(t)=n^{1-\alpha}e^{b_nn t}   \T\Psi^n(nt).\]
Using change of variable ($nt=s$) and the above equality for $\mL \T\Psi^n(z)$, we have for $z\ge nb_n$,
\begin{align*}
\mL K^n(z)&=\int_0^\infty e^{-zt}e^{nb_nt}n^{1-\alpha}\T \Psi^n(nt) dt\\
&=n^{-\alpha}\mL\T\Psi^n(z/n-b_n)\\
&=n^{-\alpha}\frac{a_n\mL\varphi(z/n)}{1-a_n+a_n \Big(1-\mL\varphi(z/n) \Big)}\\
&=\frac{a_n\mL\varphi(z/n)}{-n^{\alpha}(a_n-1)+a_n n^{\alpha}\Big(1-\mL\varphi(z/n)\Big)}.
\end{align*}
Following from 
$$\mL\varphi(z)=1-\delta z^\alpha+o(z^\alpha),\   z\to 0^+,$$ 
with $\delta=K\frac{\Gamma(1-\alpha)}{\alpha}$, proved in  Lemma \ref{BT}-\eqref{LAP},  we deduce that 
\[\lim_{n\to\infty}a_n\mL\varphi(z/n)=1, \]
and that
\[\lim_{n\to\infty}a_nn^\alpha\Big(1-\mL\varphi(z/n)\Big)=\delta z^\alpha,\]
because $\lim\limits_{n\to\infty}a_n=1$ and that 
$$
\lim_{n\to\infty} a_nn^{\alpha}o\big((z/n)^\alpha\big)=0,
$$
which can be easily checked.
Also recall $\lim\limits_{n\to\infty}n^\alpha(a_n-1)=\lambda\delta$ assumed in \eqref{H2}. Consequently, we have 
\begin{align*}
\lim_{n\to\infty}\mL K^n(z)
=\frac{1}{(-\lambda+z^\alpha)\delta}=\mL \Big( \frac{1}{\lambda\delta} f^{\alpha,\lambda} \Big)(z),
\end{align*}
which implies that the measures with density $K^n(t)$ converges weakly to the measure with density $ \frac{1}{\lambda\delta}f^{\alpha,\lambda}(t)$. On the interval $[0,1]$, the measures with density $K^n$ and the measure with density $ \frac{1}{\lambda\delta}f^{\alpha,\lambda}$ are finite, which, along with the continuity of  $F^{\alpha,\lambda}$ gives the uniform convergence of $F^n$ to $F^{\alpha,\lambda}$.

\end{proof}

\vip

Now, we are going to give some basic estimation for the expectation of the Hawkes process and the rescaled intensity process,  which is crucial to prove
the tightness of the renormalized Hawkes process $X^n$ and its related integrated intensity $\Lambda^n$ introduced in \ref{notation}.
\begin{lemma}\label{estiH} Assume \eqref{H1} and \eqref{H2}. Consider the solution $(Z_t^n)_{t\ge0}$ to \eqref{sssy} and $\lambda^{n}$ defined in \eqref{sssy}. We have 
 \begin{enumerate} 
 \item There is $C>0$ independent of  $n$ and $t$ such that
 $$\sup\limits_{t\in[0,1]}\E[Z_{nt}^{n}]
 \le Cn^{2\alpha}.$$
\item  
There is $C>0$ independent of  $n$ and $t$ such that 
\[\sup_{n\ge1}\sup_{t\in[0,1]}\E\Big[\frac{\lambda_{nt}^n}{n^{2\alpha-1}}\Big]\le C.\]
\end{enumerate}
\end{lemma}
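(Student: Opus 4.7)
The plan is to derive a closed-form expression for $\E[\lambda_t^n]$ through the resolvent $\Psi^n$, then to rescale time via $t\mapsto nt$ so that the integral $\int_0^{nt}\Psi^n(s)\,ds$ can be rewritten in terms of the objects $K^n$ and $F^n$ introduced in Lemma \ref{KLP}. The uniform control of $F^n$ on $[0,1]$ together with the scaling $n^{1-\alpha}\mu_n\to\mu^*\delta$ from \eqref{H2} will then deliver both bounds mechanically.

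First, I would compensate: since $M^n_t = Z^n_t - \int_0^t \lambda^n_s\,ds$ is a martingale vanishing at $0$, $\E[Z^n_t] = \int_0^t \E[\lambda^n_s]\,ds$. Taking expectations in the definition of $\lambda^n_t$ yields the renewal equation $\E[\lambda^n_t] = \mu_n + \int_0^t \varphi^n(t-s)\E[\lambda^n_s]\,ds$, which inverts through the resolvent $\Psi^n$ from \eqref{PSI2} to give the classical formula $\E[\lambda^n_t] = \mu_n + \mu_n\int_0^t \Psi^n(s)\,ds$. Second, for $v\in[0,1]$ the change of variable $s=nu$ gives
\[
\int_0^{nv}\Psi^n(s)\,ds \;=\; \int_0^v n^{\alpha}\cdot n^{1-\alpha}\Psi^n(nu)\,du \;=\; n^\alpha F^n(v),
\]
so $\E[\lambda^n_{nv}] = \mu_n\bigl(1 + n^\alpha F^n(v)\bigr)$. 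Third, Lemma \ref{KLP} says $F^n\to F^{\alpha,\lambda}$ uniformly on $[0,1]$ with a continuous limit, hence $\sup_n\sup_{v\in[0,1]} F^n(v) \le C$; combined with $\mu_n \le Cn^{\alpha-1}$ coming from \eqref{H2}, this yields $\E[\lambda^n_{nv}] \le Cn^{2\alpha-1}$ uniformly in $n$ and $v\in[0,1]$, which is item (2). Integrating in time, $\E[Z^n_{nt}] = n\int_0^t \E[\lambda^n_{nv}]\,dv \le Cn^{2\alpha}$, which is item (1).

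I do not anticipate a genuine obstacle: the statement reduces entirely to the renewal formula, the scaling properties established in Lemma \ref{KLP}, and the prescribed order of $\mu_n$ in \eqref{H2}. The only piece of bookkeeping worth care is that the factor of $n$ produced by the change of variables $s=nu$ combines with the prefactor $n^{1-\alpha}$ hidden inside the definition of $K^n$ to produce exactly $n^\alpha$, not $n$ or $n^{2\alpha}$; once this power is correctly tracked, both items (1) and (2) follow immediately from the bound on $F^n$ and the asymptotics of $\mu_n$.
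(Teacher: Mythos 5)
Your proof is correct, but it takes a genuinely different route from the paper's. Both proofs start from the same renewal representation $\E[\lambda_t^n]=\mu_n+\mu_n\int_0^t\Psi^n(s)\,ds$ (equivalently, the paper works with $\E[Z_t^n]=\mu_n t+\mu_n\int_0^t s\,\Psi^n(t-s)\,ds$ for item (1)), and everything hinges on controlling $\int_0^{nt}\Psi^n(s)\,ds$ uniformly in $t\in[0,1]$. The paper does this \emph{directly}: writing $\Psi^n(s)=e^{b_n s}\,\T\Psi^n(s)$ it bounds $\int_0^{nt}\Psi^n\le e^{b_n n}\|\T\Psi^n\|_1=e^{b_n n}(a_n-1)^{-1}$, and then invokes Lemma \ref{BT} to control the prefactor $e^{b_n n}$ together with $(a_n-1)^{-1}\sim(\lambda\delta)^{-1}n^\alpha$ from \eqref{H2}. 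You instead observe that $\int_0^{nt}\Psi^n(s)\,ds = n^\alpha F^n(t)$ and lean on the \emph{uniform convergence} $F^n\to F^{\alpha,\lambda}$ from Lemma \ref{KLP} to get a uniform-in-$n$ bound on $F^n$. Your route is arguably cleaner since it reuses the scaling-limit machinery already established, and it makes the appearance of the power $n^{2\alpha-1}$ transparent; the paper's route is more elementary in that it needs only Lemma \ref{BT} and the $L^1$-norm computation \eqref{norm}, not the full Laplace-transform analysis behind Lemma \ref{KLP}. There is no circularity either way: Lemma \ref{KLP} is proved before Lemma \ref{estiH} and does not depend on it, so your appeal to it is legitimate. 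One minor remark: to convert uniform convergence into a uniform bound on $\{F^n\}_{n\ge1}$ you implicitly use that each individual $F^n$ is bounded on $[0,1]$; this is indeed true (as the paper's estimate $\int_0^{nt}\Psi^n\le e^{b_n n}(a_n-1)^{-1}<\infty$ shows), and is worth a half-line of justification.
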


\begin{proof} 
It follows from \cite[Lemma 22]{MR3449317} that the Hawkes process $(Z_t^n)_{t\ge0}$ can be rewritten as 
\[Z_t^n=M_t^n+\mu_n t+\int_0^t  \varphi^n(t-s)Z_s^nds.\]
According to  \cite[Lemma 3]{MR3054533}, see also \cite[Lemma 8]{MR3499526}, we then have  for $t\ge0$,
\begin{align}\label{expZ}
\E[Z_t^n]=\mu_n t+\mu_n\int_0^t s\Psi^n(t-s)\ ds.
\end{align}
Replacing $t$ by $nt$ in \eqref{expZ}, we obtain for any $t\in[0,1]$
\begin{align*}
    \E[Z_{nt}^{n}]&=\mu_n nt+\mu_n \int_{0}^{nt}s\Psi^n(nt-s)ds\\
   & \leq\mu_n nt+\mu_n nt\int_{0}^{nt}\Psi^n(nt-s)ds\\
   &\leq\mu_n nt+\mu_n nt\int_{0}^{nt} e^{b_n (nt-s)}\T \Psi^n(nt-s)ds\\
   &\leq\mu_n nt+\mu_n nt e^{b_n nt}\|\T \Psi^n\|_1\le\mu_n n e^{nb_n}\frac{a_n}{a_n-1},
    \end{align*}
    which completes the proof of (1) under the assumption $\eqref{H1}$, $\eqref{H2}$ together with Lemma \ref{BT}. 
\vip
For (2), according to \cite[Proposition 2.1]{MR3313750}, we know that $\lambda^n$ has the following representation, for $t\ge0$,
\[\lambda_t^n=\mu_n+\mu_n\int_0^t\Psi^n(t-s)ds+\int_0^t\Psi^n(t-s)dM_s^n,\] where $M^n$ is defined in \eqref{mart}. For any $t\in[0,1]$,
\begin{align*}
    \E[\lambda_{nt}^{n}]&=\mu_n+\mu_n \int_{0}^{nt}\Psi^n (nt-s)ds\\
    &=\mu_n+\mu_n \int_{0}^{nt}e^{b_n(nt-s)}\T \Psi^n(nt-s)ds\\
    &\le \mu_n e^{b_n nt}(1+\|\T \Psi^n\|_1)\le  \mu_n e^{b_n n}\frac{a_n}{a_n-1}.
 \end{align*}
 Since \eqref{H1}, \eqref{H2} and Lemma \eqref{BT},  $\frac{e^{b_nn}\mu_na_n}{n^{2\alpha-1}(a_n-1)}\to\frac{\mu^*}{\lambda}e^{(2\lambda)^{1/\alpha}}$ as $n\to\infty$,
 which gives the proof of (2).
\end{proof}

Next, we prove that  the limit of $X^n$  is equal to that of $\Lambda^n$ which means that we can work with  $\Lambda^n$ instead of $X^n$.
\vip
\begin{lemma}\label{CIP} Recall the renormalized Hawkes process $X_t^n=Z^n_{nt}/n^{2\alpha}$ and the integrated intensity  $\Lambda_t^n=\frac{1}{n^{2\alpha}} \int^{nt}_0\lambda_{s}^{n}ds$ for $t\in[0,1]$ introduced in \ref{notation}. Under the assumption \eqref{H1} and \eqref{H2}, the sequence of martingales $X^n-\Lambda^n$ converges to zero in probability uniformly in time.
\end{lemma}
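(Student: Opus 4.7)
The plan is to exploit the martingale identity $X^n_t - \Lambda^n_t = \overline{M}^n_t/n^{\alpha}$ recorded in Section \ref{notation}, and to control $\overline{M}^n$ in $L^2$ via Doob's maximal inequality combined with the moment bound already proved in Lemma \ref{estiH}. Since $L^2$ convergence of the supremum implies convergence in probability uniformly in time, this reduces the lemma to a single moment estimate.

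Concretely, I would first recall that $X^n - \Lambda^n = \overline{M}^n/n^{\alpha}$ is a c\`adl\`ag pure jump martingale on $[0,1]$ with predictable quadratic variation $\langle \overline{M}^n, \overline{M}^n\rangle_t = \Lambda^n_t$. Applying Doob's $L^2$ inequality to $\overline{M}^n$ gives
\begin{equation*}
\E\Big[\sup_{t\in[0,1]}|X^n_t-\Lambda^n_t|^2\Big]=\frac{1}{n^{2\alpha}}\E\Big[\sup_{t\in[0,1]}(\overline{M}^n_t)^2\Big]\le \frac{4}{n^{2\alpha}}\E[\langle \overline{M}^n,\overline{M}^n\rangle_1]=\frac{4}{n^{2\alpha}}\E[\Lambda^n_1].
\end{equation*}

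Next I would bound $\E[\Lambda^n_1]$. By the change of variables $s=nu$,
\begin{equation*}
\E[\Lambda^n_1]=\frac{1}{n^{2\alpha}}\int_0^n\E[\lambda^n_s]\,ds=n^{1-2\alpha}\int_0^1\E[\lambda^n_{nu}]\,du=\int_0^1\E\!\left[\frac{\lambda^n_{nu}}{n^{2\alpha-1}}\right]du,
\end{equation*}
and Lemma \ref{estiH}(2) yields a constant $C>0$ independent of $n$ with $\E[\Lambda^n_1]\le C$. Plugging this into the previous display gives
\begin{equation*}
\E\Big[\sup_{t\in[0,1]}|X^n_t-\Lambda^n_t|^2\Big]\le \frac{4C}{n^{2\alpha}}\xrightarrow[n\to\infty]{}0,
\end{equation*}
since $\alpha>0$. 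Convergence in $L^2$ of the supremum implies uniform convergence in probability, completing the proof.

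I do not anticipate a serious obstacle: the only nontrivial ingredient, namely the uniform bound $\sup_n\E[\Lambda^n_1]<\infty$, is essentially built into Lemma \ref{estiH}(2) — which itself already absorbs the delicate contribution of the factor $e^{nb_n}$ through Lemma \ref{BT}. The remaining steps are the standard Doob inequality and a direct rescaling, so the argument should be short and self-contained once the previous lemmas are in hand.
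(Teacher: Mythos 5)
Your proposal is correct and follows essentially the same route as the paper: apply Doob's $L^2$ maximal inequality to the martingale $X^n-\Lambda^n$ and invoke the moment bound from Lemma \ref{estiH}. The only cosmetic difference is that you control the martingale via the predictable quadratic variation $\langle\overline{M}^n,\overline{M}^n\rangle_1=\Lambda^n_1$ and Lemma \ref{estiH}(2), while the paper uses the optional quadratic variation $[M^n,M^n]_n=Z^n_n$ and Lemma \ref{estiH}(1); these coincide in expectation, so the two arguments are equivalent.
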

\begin{proof}
In order to prove  $X^n-\Lambda^n$ converging to zero in probability uniformly in time, it suffices to prove that $X^n-\Lambda^n$ converges to zero in $L^2$ uniformly in time. 
Recalling  $M_t^n=Z_{t}^{n}-\int_0^t \lambda^n_s ds$, we find the difference 
\[X^n_t-\Lambda^n_t=\frac{M_{nt}^n}{n^{2\alpha}}.\]
Applying the Doob's martingale inequality to the martingale $M^n$ and using $[M^n,M^n]_t=Z_t^n$ for any $t\ge0$, we have 
\begin{align*}
\E\Big[\sup_{t\in[0,1]}(X^n_t-\Lambda^n_t)^2\Big] 
\le \frac{4}{n^{4\alpha}}\E\big[\big(M_n^n\big)^2\big]
=\frac{4}{n^{4\alpha}}\E\big[[M^n,M^n]_n\big]
=\frac{4}{n^{4\alpha}}\E\big[Z^n_n\big].
\end{align*}
By Lemma \ref{BT},  \ref{estiH} and \eqref{H2}, we conclude that 
\begin{align*}
\E\Big[\sup_{t\in[0,1]}(X^n_t-\Lambda^n_t)^2\Big] 
\le \frac{C}{n^{2\alpha}}\to 0,
\end{align*}
as $n\to\infty$. Hence, we complete the proof.
\end{proof}

\begin{remark} If $X^n\stackrel{d}\longrightarrow X$, then Lemma \ref{CIP} implies that $\Lambda^n\stackrel{d}\longrightarrow X$.
\end{remark}

We are now prepared for proving the weak convergence of $(\overline{M}^n,X^n)$ introduced in section \ref{notation} in the sense of  the Skorohod  topology.
\begin{prop}\label{limpoi}
Under the assumption \eqref{H1} and \eqref{H2},  the sequence $(\overline{M}^n,X^n)$ is tight. Moreover, if a process $(M^*,X)$ is a limit  of $(\overline{M}^n,X^n)$, then $M^*$ is a continuous martingale with $[M^*,M^*]=<M^*,M^*>=X$.
\end{prop}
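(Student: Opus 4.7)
The plan is to prove tightness of each coordinate separately and then identify the joint limit via the standard stability theorems for c\`adl\`ag local martingales found in \cite{MR1943877}.

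I would first dispose of $\Lambda^n$ (and hence of $X^n$). Changing variables in $\Lambda^n_t = n^{-2\alpha}\int_0^{nt}\lambda^n_s\,ds$ yields $\Lambda^n_t = \int_0^t (\lambda^n_{nu}/n^{2\alpha-1})\,du$, so Lemma \ref{estiH}(2) gives the equicontinuity-in-mean estimate $\E[\Lambda^n_t-\Lambda^n_s] \le C(t-s)$ uniformly in $n$ for $0\le s\le t\le 1$. Since $\Lambda^n$ is increasing with $\Lambda^n_0=0$, this is enough to conclude $C$-tightness (for instance via the Aldous criterion applied to an absolutely continuous process). Lemma \ref{CIP} then transfers $C$-tightness to $X^n$ and ensures that any subsequential limit of $X^n$ coincides with that of $\Lambda^n$ and is continuous.

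For $\overline{M}^n$, the jumps satisfy $|\Delta \overline{M}^n_t|\le n^{-\alpha}\to 0$ since $Z^n$ has unit jumps, while $[\overline{M}^n,\overline{M}^n]_t = X^n_t$ is already known to be $C$-tight. A standard martingale tightness criterion (e.g.\ \cite[Theorem VI.4.13]{MR1943877}) then yields $C$-tightness of $\overline{M}^n$ with every limit continuous, and joint tightness of $(\overline{M}^n,X^n)$ follows immediately from marginal tightness.

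To identify the limit, let $(M^*,X)$ be a subsequential limit along some subsequence, still indexed by $n$ for convenience. The bound $\E[(\overline{M}^n_t)^2] = \E[[\overline{M}^n,\overline{M}^n]_t]=\E[X^n_t]\le C$, coming from Lemma \ref{estiH}(1), provides the uniform integrability needed to pass the martingale property to $M^*$. Since the jumps of $\overline{M}^n$ vanish uniformly and $X^n=[\overline{M}^n,\overline{M}^n]$ converges to $X$, a joint convergence theorem for martingales and their brackets (see e.g.\ \cite[Corollary VI.6.30]{MR1943877}) identifies $[M^*,M^*]=X$, and continuity of $M^*$ finally gives $\langle M^*,M^*\rangle = [M^*,M^*] = X$. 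The main obstacle is precisely the joint stability of $(\overline{M}^n,[\overline{M}^n,\overline{M}^n])$ under Skorokhod convergence: it rests on the vanishing jump size $n^{-\alpha}$ combined with the uniform $L^2$ control of $\overline{M}^n$ provided by Lemma \ref{estiH}, both of which are available here.
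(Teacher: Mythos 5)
Your proof follows essentially the same route as the paper: tightness of $\Lambda^n$ from the mean bound in Lemma \ref{estiH}, $C$-tightness of $\overline{M}^n$ via the predictable-bracket criterion and the vanishing jump size $n^{-\alpha}$, and identification of $[M^*,M^*]=X$ via the stability theorem for local martingales with vanishing jumps. The only cosmetic difference is in establishing that $M^*$ is a true (not merely local) martingale — you invoke the uniform $L^2$ bound $\E[(\overline{M}^n_t)^2]=\E[X^n_t]\le C$ and uniform integrability, whereas the paper first gets a local martingale from \cite[Cor.\ IX.1.19]{MR1943877} and then applies BDG, Cauchy--Schwarz and Fatou; both are valid and rely on the same estimate from Lemma \ref{estiH}.
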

\begin{proof} We will divide the proof into two steps.
\vip
{\it{Step 1.}} The first step is to prove the  $\mathbb{C}$-tightness of $X^n$ and $\Lambda^n$ (i.e. they are tight and their limits are continuous) in $D([0,1],\R_+)$ equipped with the Skorohod topology.  Recalling $X_t^n=Z^n_{nt}/n^{2\alpha}$, we then deduce from Lemma \ref{estiH} that there exists  $C>0$ independent of $n$ and $t$ such that 
 \begin{align*}
        \E[\Lambda_{1}^{n}]=\E[X_{1}^{n}]=\frac{1}{n^{2\alpha}}\E[Z^n_n]\le C.
    \end{align*}
Whence,  the processes $(X_t^n)_{t\in[0,1]}$ and $(\Lambda_t^n)_{t\in[0,1]}$ are tight thanks to the fact that both processes are increasing. Following from \cite[Proposition VI-3.26]{MR1943877},  it's thus clear  that $(\Lambda_t^n)_{t\in[0,1]}$ and $(X_t^n)_{t\in[0,1]}$ are $\mathbb{C}$-tight  because $\Lambda_t^n$ is continuous for $t\in[0,1]$ (the jump size is zero) and
\[\clim\sup_{t\in[0,1]}|\Delta X_t^n|=\clim\frac{1}{n^{2\alpha}}=0,\]
i.e., the maximum jump size of $X^n$ converges to zero as $n$ goes to infinity.
\vip
{\it{Step 2.}} 
Noting that $<\overline{M}^n, \overline{M}^n>_t=\Lambda^n_t$ for $t\in[0,1]$ together with that the  $\mathbb{C}$-tightness of $\Lambda^n$ is proved in {\it{Step 1}}, the tightness of $(\overline{M}^n_t)_{t\in[0,1]}$ is straightforward followed from \cite[Theorem VI-4.13]{MR1943877}, which furthermore implies the tightness of $(\overline{M}^n, X^n)$. By Prokhorov's theorem,  there exists some increasing subsequence still denoted by $\{n\}_{n\ge 1}$ and a limit point $(M^*,X)$ such that   $(\overline{M}^{n},X^{n})\stackrel{d}\longrightarrow(M^*,X)$. Since each $\overline{M}^{n}$ is a martingale, it's of course a local martingale, and its maximum jump size $\sup\limits_{t\in[0,1]}|\Delta \overline{M}^{n}_t|=\frac{1}{n^\alpha}\to0$ as $n\to\infty$, we thus deduce from \cite[Proposition VI-6.26]{MR1943877} that $[\overline{M}^{n},\overline{M}^{n}]\stackrel{d}\longrightarrow[M^*,M^*]$, which, together with  $[\overline{M}^{n},\overline{M}^{n}]=X^{n}$ and  $X^{n}\stackrel{d}\longrightarrow X$, we get $[M^*,M^*]=X$. Moreover, from \cite[Proposition VI-3.26]{MR1943877},  we have  $\overline{M}^{n}$ is $\mathbb{C}$-tight since the maximum jump size of $\overline{M}^{n}$ goes to zero, which immediately implies  $M^*$ is continuous.  
\vip
Finally, it remians to prove that $M^*$ is a martingale. From \cite[Corollary IX.1.19]{MR1943877}, we know that $M^*$ is a local martingale. In order to prove that a local martingale $M^*$ is a martingale,  it suffices to check 
\[\E\big[\sup_{s\in[0,t]}|M^*_s|\big]<\infty,\, \hbox{fo\  every\  $t$.}\] Indeed, by the Burkholder–Davis–Gundy inequality and the Cauchy–Schwarz inequality, there is $C$ such that for any $t\in[0,1]$,
\[\E\big[\sup_{s\in[0,t]}|M^*_s|\big]\le C\E\big[[M^*,M^*]_t^{1/2}\big]\le C\Big(\E\big[[M^*,M^*]_t\big]\Big)^{1/2}=C\Big(\E\big[X_t\big]\Big)^{1/2}\le C.\]
The fact  $\E[X]$ being bounded is due to the Fatou's Lemma, i.e.
\[\E[X_t]\le \liminf_{n\to\infty}\E[X^{n}_t]\le  \liminf_{n\to\infty}\E[X^{n}_1]\le C.\]
We complete the proof.
\end{proof}

\section{Proof of the main results for the univariate case}

By the Skorohod representation theorem in \cite[p.9,Theorem 2.7]{MR1011252}, without loss of generality, we assume that the limit in above proposition holds almost surely, and we still use $\{n\}$ to represent its subsequence $\{n_i\}$ for convenience.  Since $(M^*,X)$ are continuous, we have 
\begin{align}\label{as}
\lim_{n\to\infty}\sup_{t\in[0,1]}|\overline{M}_t^n-M_t^*|=0,\  a.s.
\end{align}
\begin{proof}[Proof of Theorem \ref{thm1}]
According to Lemma \ref{CIP}, in order to get the limit of $X^n$, it's equivalent to look at the limit of $\Lambda^n$. We will proceed the proof  in steps. 

\vip
{\it Step 1}. First, we will rewrite $\Lambda_t^n=\frac{1}{n^{2\alpha}} \int^{nt}_0\lambda_{s}^{n}ds$. Recalling  the intensity $\lambda_{t}^{n}=\mu_n+\int_{0}^{t}\varphi^n(t-s)dZ_{s}^{n}$  and  using twice the  Fubini theorem, we have for $t\in[0,1]$,
\begin{align*}
\int^{t}_0\lambda_{s}^{n}ds=\mu_nt+\int_{0}^{t}\varphi^n(t-s)M_{s}^{n}ds+\int_{0}^{t}\int_{0}^{s}\varphi^n(t-s)\lambda_{v}^{n}dvds.
\end{align*}
Then, according to \cite[Lemma 3]{MR3054533}, see also \cite[Lemma 8]{MR3499526},  and  $\Psi^n(t)$ defined in \eqref{PSI2}, we have 
\begin{align*}
\int^{t}_0\lambda_{s}^{n}ds&=\mu_nt+\int_{0}^{t}\varphi^n(t-s)M_{s}^{n}ds+\sum_{n=1}^\infty\int_0^t (\varphi^n)^{*n}(t-s) \Big( \mu_ns+\int_{0}^{s}\varphi^n(s-r)M_{r}^{n}dr \Big)ds\\
&=\mu_nt+\int_{0}^{t}\varphi^n(t-s)M_{s}^{n}ds+\int_0^t \Psi^n(t-s)\mu_ns ds +\int_0^t \Psi^n(t-s)\int_{0}^{s}\varphi^n(s-r)M_{r}^{n}dr ds.
\end{align*}
Following from the fact that
\begin{align*}
\int_0^t \Psi^n(t-s) \int_0^s \varphi^n(s-r)M_r^n drds=\int_0^t \Psi^n(t-r)M_r^n dr-\int_0^t \varphi^n(t-r)M_r^n dr,
\end{align*}
proved in \cite{MR3563196}, we have
\begin{align*}
\int^{t}_0\lambda_{s}^{n}ds=\mu_nt+\int_0^t \Psi^n(t-s)s\mu_n ds+\int_0^t \Psi^n(t-s)M_s^n ds.
\end{align*}
Now, a change of variable leads to
 \begin{align*}
\Lambda_t^n=\frac{\mu_nt}{n^{2\alpha-1}}+\frac{\mu_n}{n^{2\alpha-2}}\int_0^{t} \Psi^n(n(t-s))s ds+\frac{1}{n^{2\alpha-1}}\int_0^{t} \Psi^n(n(t-s))M_{ns}^n ds.
\end{align*} 

\vip

{\it Step 2}. We are going to compute the limit  of $\Lambda^n$ term by term. First of all, it's obvious that  $\lim\limits_{n\to\infty}\frac{\mu_nt}{n^{2\alpha-1}}=0$ for any $t\in[0,1]$ since $\lim\limits_{n\to\infty} n^{1-\alpha}\mu_n=\mu^*\delta$ in \eqref{H2}. For the second term, we denote $I_t^n:=\frac{\mu_n}{n^{2\alpha-2}}\int_0^{t} \Psi^n(n(t-s))s ds$.  Then, 
\[\lim_{n\to\infty} \sup_{t\in[0,1]}\left|I_t^n-\frac{\mu^*}{\lambda}\int_0^t f^{\alpha,\lambda}(t-s)sds\right|=0.\]
 Recalling $K^n$ in Lemma  \ref{KLP}-\eqref{kn}, we have 
\begin{align*}
I_t^n=\mu_nn^{1-\alpha}\int_0^t K^n(t-s)sds.
\end{align*}
Integrating by parts and recalling $F^n, F^{\alpha,\lambda}$ defined  in Lemma \ref{KLP}, we have 
\begin{align*}
&I_t^n =\mu_nn^{1-\alpha}\int_0^t F^n(t-s)ds.\\
&\frac{\mu^*}{\lambda}\int_0^t f^{\alpha,\lambda}(t-s)sds =\mu^*\delta\int_0^t F^{\alpha,\lambda}(t-s)ds.
\end{align*}
Using  the triangle inequality together with  \eqref{H2} and Lemma \ref{KLP}, we have for any $t\in[0,1]$,
\[\left|I_t^n-\frac{\mu^*}{\lambda}\int_0^t f^{\alpha,\lambda}(t-s)sds\right|\to 0,\]
as $n$ goes to infinity.
\vip

{\it Step 3}. Finally,  let's study the convergence of  $J_t^n:=\frac{1}{n^{\alpha-1}}\int_0^{t} \Psi^n(n(t-s))\overline{M}_{s}^n ds$ with $\overline{M}^n_t=M^n_{nt}/n^{\alpha}$. Set
\[V_t:=\frac{1}{\lambda\delta}\int_0^tf^{\alpha,\lambda}(t-s)M^*_s ds, \  t\in[0,1].\]
 Recalling $K^n$ defined in \eqref{kn}, $J_t^n$ can be written as 
 \begin{align*}
J_t^n&=\int_0^t K^n(t-s)\overline{M}^n_s ds.
\end{align*}
Then, $|J_t^n-V_t|\le|\Delta_1^n(t)|+| \Delta_2^n(t)|$ with 
\begin{align*}
\Delta_1^n(t)&:=\int_0^t \Big(K^n(t-s)-\frac{1}{\lambda\delta}f^{\alpha,\lambda}(t-s)\Big)\overline{M}_s^n ds,\\
\Delta_2^n(t)&:=\frac{1}{\lambda\delta} \int_0^t f^{\alpha,\lambda}(t-s) (\overline{M}_s^n-M^*_s) ds.
\end{align*}
According to \eqref{as}, we easily conclude that 
\[
\sup_{t\in[0,1]}|\Delta_2^n(t)|\le \sup_{s\in[0,1]}|\overline{M}_s^n-M^*_s|   \frac{1}{\lambda\delta} \int_0^t f^{\alpha,\lambda}(t-s)ds\to 0, \ a.s.
\]
Furthermore, for $\Delta_1^n(t)$, using $<\overline{M}^n, \overline{M}^n>_t=\Lambda^n_t$ for $t\in[0,1]$ and the Stieltjes integration by parts, we get 
\begin{align*}
\E[|\Delta_1^n(t)|^2]&=\E\left[\left(\int_0^t \Big(K^n(t-s)-\frac{1}{\lambda\delta}f^{\alpha,\lambda}(t-s)\Big)\overline{M}^n_s ds\right)^2\right]
\\
&=\E\left[\left(\int_0^t \Big(F^n(t-s)-F^{\alpha,\lambda}(t-s)\Big)d\overline{M}^n_s \right)^2\right]\\
&= \E\left[ \int_0^t \Big(F^n(t-s) - F^{\alpha,\lambda}(t-s)\Big)^2d\Lambda^n_s \right]\\
&=\left[ \int_0^t \Big(F^n(t-s) - F^{\alpha,\lambda}(t-s)\Big)^2 \frac{1}{n^{2\alpha-1}} \E[\lambda^n_{ns}] ds\right] \  \Big(\hbox{Recalling}\  \Lambda_t^n=\frac{1}{n^{2\alpha}} \int^{nt}_0\lambda_{s}^{n}ds \Big)\\
&\le C\int_0^t \Big(F^n(t-s)-F^{\alpha,\lambda}(t-s)\Big)^2ds,
\end{align*}
where $C$ is a positive constant independent of $n$ and $t$. The above last inequality holds due to Lemma \ref{estiH}-(2). Then following from the dominated convergence theorem and  the uniform convergence proved in Lemma \ref{KLP} that
\begin{align*}
\lim_{n\to\infty}\E[|\Delta_1^n(t)|^2]=0.
\end{align*}
 Whence, we conclude that  $J_t^n$ converges in law  to $V_t$ for $t\in[0,1]$, which,  together with previous steps, completes the proof.
 
\end{proof}
Next, we give the 
\begin{proof}[Proof of theorem \ref{thm2}] According to Remark \ref{RK}, we know that $Y$ is continuous and $X$ can be written as
$$X_t=\int_0^t Y_sds $$ for $t\in[0,1]$. Since $<M^*,M^*>=X$ and $M^*$ is continuous, we thus conclude from the martingale representation theorem, see e.g. \cite[Theorem 7.1]{MR1011252},  that
\[M^*_t=\int_0^t \sqrt{Y_s}dB_s,\] where $B$ is a Brownian motion. By Theorem \ref{thm1}, we know that $X$ satisfies 
$$X_t=\frac{\mu^*}{\lambda}\int_0^t f^{\alpha,\lambda}(t-s)s ds+\frac{1}{\lambda\delta}\int_0^t f^{\alpha,\lambda}(t-s)M^*_sds.
$$ 
With $M^*$ replaced by$ \int_0^\cdot \sqrt{Y_s}dB_s$, we have
\[X_t=\frac{\mu^*}{\lambda}\int_0^t f^{\alpha,\lambda}(t-s)s ds+\frac{1}{\lambda\delta}\int_0^t f^{\alpha,\lambda}(t-s)\int_0^s \sqrt{Y_r}dB_rds.\]
 Then, integration by parts and a change of variable give us 
\[\frac{1}{\lambda\delta}\int_0^t f^{\alpha,\lambda}(t-s)s ds=\int_0^t F^{\alpha,\lambda}(t-s)ds=\int_0^t F^{\alpha,\lambda}(s)ds.\]
Using twice the Fubini theorem , we  have 
\[\int_0^t f^{\alpha,\lambda}(t-s)\int_0^s \sqrt{Y_r}dB_rds=\int_0^t \int_0^s f^{\alpha,\lambda}(s-r) \sqrt{Y_r}dB_rds.\]
Thus, $X$ satisfies 
\[X_t=\mu^*\delta \int_0^t F^{\alpha,\lambda}(s)ds+\frac{1}{\lambda\delta}\int_0^t \int_0^s f^{\alpha,\lambda}(s-r) \sqrt{Y_r}dB_rds.\]
Since $X$ is differentiable and the integrands of  the preceding equality are continuous, Theorem \ref{thm2}  holds by differentiating both sides of the above equality with respect to $t$. The uniqueness in law of the equation \eqref{Vpro} follows directly from \cite[Theorem 6.1]{MR4019885}.
\end{proof}

\section*{Acknowledgements}
\quad~~
Liping Xu is supported by National Natural Science Foundation of China (12101028). 
\bibliographystyle{abbrv} 
\bibliography{heavy}

\end{CJK}
\end{document}